\pdfoutput=1
\documentclass[11 pt, oneside, reqno]{amsart}
\usepackage[top=1.5in,bottom=1in,left=1in,right=1in]{geometry}
\usepackage{amsmath,amssymb,amsthm,amsopn,extarrows,accents,faktor,enumitem,stmaryrd,mathtools}  
\usepackage[linktocpage=true]{hyperref}
\hypersetup{
    colorlinks = true,
    allcolors = {blue}}
\usepackage[dvipsnames]{xcolor}

\usepackage{setspace}
\setstretch{1.2}

\usepackage{tikz-cd,tikz}
\usetikzlibrary{matrix,arrows,decorations.pathmorphing}

\newtheorem{lemma}[equation]{Lemma}
\newtheorem{theorem}[equation]{Theorem}
\newtheorem*{theorem*}{Theorem}
\newtheorem{proposition}[equation]{Proposition}

\theoremstyle{definition}

\newtheorem{example}[equation]{Example}

\numberwithin{equation}{section}

\DeclareMathOperator{\im}{im}

\newcommand{\g}{\mathfrak{g}}
\renewcommand{\b}{\mathfrak{b}}
\newcommand{\uu}{\mathfrak{u}}
\renewcommand{\t}{\mathfrak{t}}
\newcommand{\B}{\mathcal{B}}
\newcommand{\Gt}{\widetilde{G}}
\newcommand{\gt}{\widetilde{\g}}
\newcommand{\dg}{\mathbb{D}_G}

\newcommand{\nubar}{\overline{\nu}}
\DeclareMathOperator{\Ad}{Ad}

\DeclareMathOperator{\spec}{Spec}

\setcounter{tocdepth}{1}
\makeatletter
\def\l@subsection{\@tocline{2}{0pt}{3pc}{5pc}{}}
\makeatother

\title{A quasi-Poisson structure on the multiplicative Grothendieck--Springer resolution}
\author{Ana B\u{a}libanu}
\address{Department of Mathematics, Harvard University, 1 Oxford Street, Cambridge, MA  02138, USA}
\email{ana@math.harvard.edu}
\date{}

\begin{document}
\maketitle

\begin{abstract}
In this note we show that the multiplicative Grothendieck--Springer space has a natural quasi-Poisson structure. The associated group-valued moment map is the resolution morphism, and the quasi-Hamiltonian leaves are the connected components of the preimages of Steinberg fibers. This is a multiplicative analogue of the standard Poisson structure on the additive Grothendieck--Springer resolution, and an explicit illustration of a more general procedure of reduction along Dirac realizations which was developed in recent work of the author and Mayrand.
\end{abstract}

\tableofcontents
\section*{Introduction}
The multiplicative Grothendieck--Springer resolution of a complex reductive group $G$ is the incidence variety 
\[\Gt=\{(g,B)\in G\times\mathcal{B}\mid g\in B\},\]
where $\B$ is the flag variety of $G$, viewed as the space of all Borel subgroups or, equivalently, of all Borel subalgebras. The first projection  
\[\mu:\Gt\longrightarrow G\]
is a generically finite map which restricts to a resolution of singularities along each Steinberg fiber, and its restriction to the unipotent cone was originally constructed by Springer \cite{spr:69}. 

The group $G$ is an example of a quasi-Poisson manifold. Such manifolds are generalizations of Poisson structures in which the Jacobi identity is twisted by a canonical trivector field, and they were introduced by Alekseev, Kosmann-Schwarzbach, and Meinrenken \cite{ale.kos.mei:02}. They are equipped with group-valued moment maps and are foliated by nondegenerate leaves, each of which carries a quasi-Hamiltonian $2$-form. In particular, the quasi-Hamiltonian leaves of $G$ are the conjugacy classes, which vary in dimension. Therefore, the quasi-Poisson structure on $G$ is singular in the sense of foliation theory. 

In this note we show that the Grothendieck--Springer resolution $\Gt$ also has a natural quasi-Poisson structure, whose group-valued moment map is $\mu$. The leaves of this structure are the connected components of the preimages of Steinberg fibers under $\mu$, and they form a regular foliation of $\Gt$. The map $\mu$ is therefore a desingularization in two ways---it resolves each singular Steinberg fiber to a smooth variety, and it resolves the singular quasi-Hamiltonian foliation of $G$ to a regular quasi-Hamiltonian foliation of $\Gt$.

%
%
%
%
%
%
%
\subsection*{The additive Grothendieck--Springer resolution}
These observations are a multiplicative analogue of the Grothendieck--Springer resolution of the Lie algebra $\g$ of $G$, which is the variety of pairs
\[\gt=\{(x,\b)\in\g\times\B\mid x\in\b\}\]
consisting of an element in the Lie algebra and a Borel subalgebra that contains it. We briefly recall its geometry, and we refer to \cite{chr.gin:97} for more details. Fixing a Borel subgroup $B$ of $G$ and writing $\b$ for its Lie algebra, there is a natural isomorphism
\begin{align*}
G\times_B\b&\xlongrightarrow{\sim}\gt\\
	[g:x]&\longmapsto (\Ad_g(x),\Ad_g(\b))
	\end{align*}
which makes $\gt$ into a vector bundle over $G/B$.

Let $\t$ be a fixed maximal Cartan in $\b$ with corresponding subgroup $T\subset B$, and let $W$ be its Weyl group. Decompose $\b=\t\oplus\uu$, where $\uu=[\b,\b]$ is the nilpotent radical of $\b$. The resolution $\gt$ fits into the diagram
\begin{equation*}
\begin{tikzcd}[row sep=large]
&\gt\arrow[ld,swap,"\mu"]\arrow[rd, "\lambda"]&\\
\g\arrow[rd,swap,"\kappa"]&&\t\arrow[ld]\\
&\t/W,&
\end{tikzcd}
\end{equation*}
where $\mu$ is the first projection, $\lambda$ is the projection of the bundle $G\times_B\b$ onto the $\t$-component of the fiber $\b$, and $\kappa$ is the composition of the adjoint quotient map with the Chevalley isomorphism
\[\g\!\sslash\! G\cong \t/W.\]

For any $s\in\t$, write $\bar{s}$ for its image in $\t/W$. When $s$ is not regular, $\kappa^{-1}(\bar{s})$ is a singular algebraic variety and the map $\mu$ restricts to a resolution of singularities
\[G\times_B(s+\uu)\cong\lambda^{-1}(s)\longrightarrow\kappa^{-1}(\bar{s}).\]
In particular, in the special case when $s=0$ this map is the Springer resolution
\[G\times_B\uu\longrightarrow\mathcal{N}\]
of the nilpotent cone $\mathcal{N}$ of $\g$.

One can equip $\gt$ with a Poisson structure through Hamiltonian reduction as follows. The action of $G$ on itself by right multiplication induces Hamiltonian actions of $G$ and $B$ on the cotangent bundle $T^*_G$. The corresponding moment maps fit into the commutative diagram 
\begin{equation*}
\begin{tikzcd}[row sep=small]
T^*_G\cong\hspace{-32pt}&G\times\g\arrow[dd,swap,"\nubar"]\arrow[rd, "\nu"]&\\
&&\g\arrow[ld]\\
&\g/\uu.&
\end{tikzcd}
\end{equation*}
Here we use the left trivialization of the cotangent bundle and we identify $\g^*$ with $\g$ and $\b^*$ with $\g/\uu$ through the Killing form, so that the moment map $\nu$ is just the second projection. Since each point in the subset $\b/\uu\subset\g/\uu$ is fixed under the action of $\b$ on $\g/\uu$, the reduced space
\[\nubar^{-1}(\b/\uu)/B=\nu^{-1}(\b)/B=G\times_B\b\]
inherits a natural Poisson structure from the canonical symplectic structure on $T^*_G$. Its symplectic leaves are the individual symplectic reductions
\[\nu^{-1}(s+\uu)/B=G\times_B(s+\uu) \quad \text{for }s\in\t,\]
each of which is an affine bundle over $G/B$. Therefore the Grothendieck--Springer resolution $\gt$ is a regular Poisson variety, and each resolution of singularities
\[G\times_B(s+\uu)\longrightarrow\kappa^{-1}(s)\]
is a symplectic resolution.

%
%
%
%
%
%
%
\subsection*{The group-valued analogue}
In the quasi-Poisson setting, the role of the cotangent bundle is played by the internal fusion double $\dg=G\times G$. A priori there is no analogue of Hamiltonian reduction with respect to the action of the Borel subgroup, because the $G$-valued moment map of $\dg$ does not descend to a moment map with values in $B$---in other words, a quasi-Poisson $G$-space is not generally quasi-Poisson for the action of a subgroup of $G$.

However, by work of Bursztyn and Crainic \cite{bur.cra:05, bur.cra:09}, quasi-Poisson manifolds are an example of a more general class of structures known as Dirac manifolds. In \cite{bal.may:22}, the author and Mayrand give a general procedure for Dirac reduction along a certain class of maps known as Dirac realizations. In this note, we show explicitly how this theory specializes to the Grothendieck--Springer space $\Gt$, using only the basics of Dirac structures. We prove the following result, as Theorems \ref{main1} and \ref{main2}.

\begin{theorem*}
There is a natural quasi-Poisson structure on the multiplicative Grothendieck--Springer resolution
\[\Gt\cong G\times_BB,\]
which is inherited from the quasi-Hamiltonian structure on $\dg$ and for which the map $\mu$ is a group-valued moment map. Its quasi-Hamiltonian leaves are the twisted unipotent bundles
\[G\times_BtU\quad \text{for } t\in T.\]
\end{theorem*}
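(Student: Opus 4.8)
The plan is to carry out, in the category of Dirac manifolds, the precise analogue of the Hamiltonian reduction that produces $\gt$ from $T^*_G$. The role of the cotangent bundle is played by the internal fusion double $\dg=G\times G$, which carries commuting left and right $G$-symmetries just as $T^*_G$ does, and the role of the moment-map target $\g$ is played by $G$ equipped with its Cartan--Dirac structure $E_G\subset TG\oplus T^*G$. Writing $\Phi\colon\dg\to G$ for the group-valued moment map of the right copy of $G$ --- the direct analogue of the second-projection moment map $\nu$ --- I would realize $\Gt\cong G\times_B B$ as the reduction of $\dg$ by the right $B$-action, with the residual left $G$-symmetry descending to the resolution morphism $\mu$. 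The guiding dictionary is $\nu\leftrightarrow\Phi$, $\g\leftrightarrow G$, $\b\leftrightarrow B$, $\uu\leftrightarrow U$, the quotient $\g/\uu\leftrightarrow G/U$, and the affine fibers $s+\uu$ of $\b\to\t$ $\leftrightarrow$ the cosets $tU$ of $B\to T$.

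First I would record the quasi-Hamiltonian structure on $\dg$ explicitly --- the two $G$-actions, the invariant $2$-form, and the moment map $\Phi$ --- following \cite{ale.kos.mei:02}, and observe that $\Phi$ exhibits $\dg$ as a Dirac realization of $(G,E_G)$ in the sense of \cite{bur.cra:05,bur.cra:09}. The decisive step is to restrict the right symmetry from $G$ to the Borel subgroup $B$. Here the obstruction noted in the introduction appears: the right $B$-action carries no $B$-valued moment map, so there is no quasi-Hamiltonian reduction to invoke, and one must instead reduce the Dirac realization $\Phi$ directly. The structural input that makes this possible --- the multiplicative counterpart of the fact that $\b/\uu\subset\g/\uu$ is the $B$-fixed locus --- is that, under the conjugation action of $B$ on the homogeneous space $G/U$, the subspace $B/U=T$ is pointwise fixed: for $t\in T$ and $b\in B$ one computes $b^{-1}tb\in tU$, so that each coset $tU$ is a $B$-fixed point of $G/U$. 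Reducing $\dg$ at this fixed locus --- equivalently, along the composite realization $\dg\to G\to G/U$ --- is what will produce $\Gt$, and this is exactly the situation covered by the reduction-along-Dirac-realizations theorem of \cite{bal.may:22}.

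I would then apply that theorem. Since the right $B$-action on $\Phi^{-1}(B)$ is free and proper, the reduced space is the smooth quotient
\[\Phi^{-1}(B)/B\;\cong\;(G\times B)/B\;=\;G\times_B B\;\cong\;\Gt,\]
which the theorem equips with a reduced Dirac structure. The remaining work, and what I expect to be the main obstacle, is to check that this reduced structure is genuinely of quasi-Poisson type rather than merely Dirac --- that is, that it is transverse to the tangent distribution and so is the graph of a bivector field which, together with the canonical trivector of $G$, makes $\Gt$ quasi-Poisson --- and that the descended left $G$-moment map is exactly $\mu\colon[g:x]\mapsto gxg^{-1}$. Confirming that reduction of a Dirac realization along the \emph{non-reductive} Borel subgroup lands back in the quasi-Poisson world, and produces an honest group-valued moment map, is where the cleanness of the fixed locus $T\subset G/U$ and the nondegeneracy of the double's $2$-form transverse to the $B$-orbits must be used.

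Finally, to identify the quasi-Hamiltonian leaves I would reduce not at the whole fixed locus but at each individual fixed point $tU\in G/U$, for $t\in T$. Because $tU$ is a single $B$-fixed point, reducing $\dg$ there is the cleanest case of the theorem and yields the twisted unipotent bundle $\Phi^{-1}(tU)/B=G\times_B tU$, carrying a nondegenerate quasi-Hamiltonian $2$-form --- the exact analogue of the symplectic leaves $G\times_B(s+\uu)$ in the additive case. Each such bundle is connected, being a fiber bundle over $G/B$ with fiber $U$, and as $t$ ranges over a single $W$-orbit their union is the $\mu$-preimage of one Steinberg fiber; hence the bundles $G\times_B tU$ are precisely the connected components of the preimages of Steinberg fibers under $\mu$, and they foliate $\Gt$ regularly. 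This would establish both parts of the statement.
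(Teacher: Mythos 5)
Your high-level strategy is the paper's own: realize $\Gt\cong G\times_BB$ as the quotient by $B$ of $\Phi^{-1}(B)=G\times B\subset\dg$, push the Dirac structure of the double forward along the quotient, and let $\mu$ and the residual left $G$-action inherit the moment-map role (the paper carries this out by hand, as an explicit illustration, rather than citing the general theorem of \cite{bal.may:22}). The genuine gap is in your central verification step. You propose to certify that the reduced structure is quasi-Poisson by checking that it is \emph{transverse to the tangent distribution, hence the graph of a bivector}. That is the criterion for a (twisted) \emph{Poisson} structure, not a quasi-Poisson one. The Dirac bundle of a Hamiltonian quasi-Poisson $G$-manifold is $L=\{(\pi^\#(\alpha)+\xi_M,\,C^*(\alpha)+\Phi^*\sigma(\xi))\}$, which is generically \emph{not} a graph over $T^*_M$: along a nondegenerate leaf $S$ one has $L\cap T_M=\ker\omega_S$, and quasi-Hamiltonian forms have nontrivial kernel wherever $\Ad$ of the moment-map value has eigenvalue $-1$. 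The Cartan--Dirac structure $L_G$ itself is the cleanest counterexample: it is quasi-Poisson (with identity moment map), yet at any $g$ with $\Ad_g\xi=-\xi$ for some $\xi\neq 0$ one has $\sigma(\xi)_g=0$ while $(\xi^L-\xi^R)_g\neq 0$, so $L_G\cap T_G\neq 0$. Thus your transversality check would fail on $\Gt$, and even where it held it would prove the wrong statement (a $\mu^*\eta$-twisted Poisson structure, with no trivector identity $[\pi,\pi]=\chi_{\Gt}$ and no moment map). The correct criterion---the one the paper uses in Theorem \ref{main1}---is the Bursztyn--Crainic characterization \cite[Proposition 3.20]{bur.cra:05}: $(M,L)$ is Hamiltonian quasi-Poisson with moment map $\Phi$ exactly when $\Phi:(M,L)\to(G,L_G)$ is f-Dirac and $\ker\Phi_*\cap L=0$; one must then also check that the induced $\g$-action is the residual action \eqref{residual}.

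Two further points you treat as automatic require actual computation. First, smoothness of the pushed-forward Dirac structure does not follow from the quotient $G\times B\to G\times_BB$ being a nice principal bundle; one needs the $B$-action to be \emph{regular} in the Dirac sense, i.e.\ $\rho_{\dg}(0\oplus\b)\cap\jmath^*L_{\dg}$ of constant rank. The paper gets this from its single computational input (Lemma \ref{kernel}): for $\xi\in\b$, $T_B\subset\ker\sigma(\xi)$ iff $\xi\in\uu$, a consequence of $\b^\perp=\uu$ under the Killing form. Your observation that $T=B/U$ is pointwise $B$-fixed in $G/U$ is the structural analogue of this fact, but by itself it verifies neither regularity nor any hypothesis of the black-box theorem you invoke. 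Second, identifying the leaves as the bundles $G\times_BtU$ is not accomplished by reducing at each point $tU$ separately: one must show these fiberwise reductions exhaust the image of the anchor of $L_{\Gt}$, which is exactly the computation of Theorem \ref{main2} (again resting on Lemma \ref{kernel}). A cosmetic error: since $\Phi(a,b)=(aba^{-1},b^{-1})$, the preimage of the coset $tU$ in the second factor is $G\times t^{-1}U$, so your labels are off by an inversion.
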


The fiber bundle $G\times_BtU$ resolves the singularities of the Steinberg fiber $F_t$, which is a possibly singular quasi-Hamiltonian variety. This resolution is quasi-Hamiltonian, in the sense that the pullback of the corresponding $2$-form along the desingularization map
\[G\times_BtU\longrightarrow F_t\]
agrees with the quasi-Hamiltonian structure on the leaf $G\times_BtU$. This quasi-Hamiltonian structure first appeared in work of Boalch \cite[Section 4]{boa:11} in the setting of moduli spaces, and the quasi-Poisson structure on $\Gt$ which induces it can also be constructed using results of of Li-Bland and \v{S}evera \cite[Theorem 5]{lib.sev:15}.

We remark that the resolution $\Gt$ also carries a Poisson structure, which was introduced by Evens and Lu \cite{eve.lu:07} and which is obtained from a Poisson structure on the double $\dg$ through coisotropic reduction. In this work, the authors equip $G$ with a compatible Poisson structure that can be viewed as the semi-classical limit of the quantum group $U_q(\g)$, and with respect to which the resolution map $\mu$ is Poisson. The $T$-orbits of the symplectic leaves of $G$ are intersections of conjugacy classes and Bruhat cells, and the singularities of their closures are resolved by the $T$-orbits of symplectic leaves in $\Gt$, which are regular Poisson manifolds.

\textbf{Acknowledgements.} This work was partially supported by a National Science Foundation MSPRF under award DMS--1902921.

%
%
%
%
%
%
%
\section{Dirac manifolds and quasi-Poisson structures}
\subsection{Dirac structures}
We begin by giving some background on Dirac manifolds, and we refer the reader to \cite{bur:13} for more details. Let $M$ be a (real or complex) manifold and let $\eta\in\Omega^3(M)$ be a closed $3$-form. A \emph{$\eta$-twisted Dirac structure} on $M$ is a subbundle $L\subset T_M\oplus T_M^*$ such that
\begin{itemize}
\item $L$ is Lagrangian with respect to the nondegenerate symmetric pairing 
\[\langle (X,\alpha),(Y,\beta)\rangle=\alpha(Y)+\beta(X),\,\,\text{and}\] 
\item the space of sections $\Gamma(L)$ is closed under the $\eta$-twisted Dorfman bracket
\[\llbracket(X,\alpha),(Y,\beta)\rrbracket=([X,Y], \mathcal{L}_X\beta+\iota_Y\alpha+\iota_{X\wedge Y}\eta).\]
\end{itemize}
While the Dorfman bracket is not a Lie bracket on $T_M\oplus T_M^*$, it restricts to a Lie bracket on the sections of the subbundle $L$. In this way $L$ becomes a Lie algebroid over $M$, with anchor map the first projection
\begin{align*}
p_T:\quad L\quad&\longrightarrow T_M\\
	(X,\alpha)&\longmapsto X.
	\end{align*}

\begin{example}
Let $\omega\in\Omega^2(M)$ be any $2$-form, and let $\eta=-d\omega$. Then the graph
\[L_\omega=\{(X,\omega^\flat(X))\mid X\in T_M\}\]
is a $\eta$-twisted Dirac structure on $M$. Such Dirac structures are called \emph{nondegenerate} or \emph{presymplectic}, and are characterized by the property 
\[L_\omega\cap T_M^*=0.\]
Note that here and throughout we view $T_M$ and $T_M^*$ as subbundles of $T_M\oplus T_M^*$ via the coordinate embeddings. In particular, a Dirac structure $L$ on $M$ is induced by a symplectic form if and only if it is non-twisted and 
\[L\cap T_M^*=L\cap T_M=0.\]
\end{example}

More generally, if $(M,L)$ is a $\eta$-twisted Dirac manifold, the image of the anchor map $p_T$ is an involutive generalized distribution which integrates to a foliation of $M$ by \emph{nondegenerate} or \emph{presymplectic leaves}. Each leaf $S$ carries a natural presymplectic form $\omega_S$, which has the property that
\[d\omega_S=-\eta_{\vert S}.\]

\begin{example}
Let $\pi\in\mathcal{X}^2(M)$ be a bivector on $M$ and suppose that there is a $3$-form $\eta\in\Omega^3(M)$ such that
\[[\pi,\pi]=2\pi^\#(\eta),\]
where the left-hand side denotes the Schouten--Nijenhuis bracket. Then $\pi$ is a \emph{twisted Poisson structure}, and its graph
\[L_\pi=\{(\pi^\#(\alpha),\alpha)\mid\alpha\in T_M^*\}\]
is a $\eta$-twisted Dirac structure on $M$. Conversely, a Dirac structure $L$ on $M$ is induced by a Poisson bivector if and only it is non-twisted and
\[L\cap T_M=0.\]
In this case, the foliation of $M$ by nondegenerate leaves is exactly the symplectic foliation.
\end{example}

Given a map $f:M\longrightarrow N$ and a $\eta_N$-twisted Dirac structure $L_N$ on $N$, the \emph{pullback} of $L_N$ under $f$ is the generalized distribution
\[f^*L_N=\{(X, f^*\alpha)\in T_M\oplus T_M^*\mid (f_*X,\alpha)\in L_N\}.\]
If $f^*L_N$ is a smooth bundle, it defines a $f^*\eta_N$-twisted Dirac structure on $M$ \cite[Proposition 1.10]{bur:13}. When $M$ is equipped with a twisted Dirac structure $L_M$, the map $f$ is called \emph{backward-Dirac} (or \emph{b-Dirac}) if 
\[L_M=f^*L_N.\]
Such maps generalize the pullbacks of differential forms. In particular, when $(M,\omega_M)$ and $(N,\omega_N)$ are nondegenerate Dirac manifolds, $f$ is b-Dirac if and only if
\[f^*\omega_N=\omega_M.\]

Similarly, if $L_M$ is a $\eta_M$-twisted Dirac structure on $M$, the \emph{pushforward} of $L_M$ under $f$ is the generalized distribution
\[f_*L_M=\{(f_*X,\alpha)\in T_N\oplus T_N^*\mid (X,f^*\alpha)\in L_M\},\]
as long as it is well-defined. 
When $N$ is equipped with a twisted Dirac structure $L_N$, the map $f$ is called \emph{forward-Dirac} (or \emph{f-Dirac}) if at every point it satisfies
\[L_N=f_*L_M.\]
Such maps generalize pushforwards of vector fields. In particular, when $(M,\pi_M)$ and $(N,\pi_N)$ are Poisson manifolds, $f$ is f-Dirac if and only if 
\[f_*\pi_M=\pi_N.\]

When $f:M\longrightarrow N$ is a diffeomorphism, it is f-Dirac if and only if it is b-Dirac, and in this case it is called a \emph{Dirac diffeomorphism}. Suppose now that a group $G$ acts on $M$ by Dirac diffeomorphisms and that $M/G$ has the structure of a manifold such that the quotient map 
\[q:M\longrightarrow M/G\]
is a smooth submersion. Write $\g$ for the Lie algebra of $G$ and 
\begin{align*}
\rho_M:\g&\longrightarrow\mathcal{X}(M)\\
\xi&\longmapsto\xi_M
\end{align*}
for the infinitesimal action map. In this case the pushforward $f_*L_M$ is a Dirac structure on $M/G$ if the action of $G$ on $M$ is \emph{regular}---that is, if the generalized distribution
\[\rho_M(\g)\cap L_M\subset T_M\]
has constant dimension \cite[Proposition 1.13]{bur:13}.

%
%
%
%
%
%
%
\subsection{Quasi-Poisson manifolds}
Let $G$ be a (real or complex) Lie group whose Lie algebra $\g$ carries an invariant, nondegenerate, symmetric bilinear form $(\cdot,\cdot)$. The \emph{Cartan $3$-form} of $G$ is the invariant $3$-form induced by the element $\eta\in\wedge^3\g^*$ defined by
\[\eta(x,y,z)=\frac{1}{12}(x,[y,z])\quad\text{for all }x,y,z\in\g,\]
and we denote by $\chi\in\wedge^3\g$ the $3$-tensor which corresponds to it under the induced identification $\g\cong\g^*$.

Through the infinitesimal action map, $\chi$ generates a canonical bi-invariant trivector field 
\[\chi_M\in\mathcal{X}^3(M).\]
A \emph{quasi-Poisson} structure on $M$ is a bivector field $\pi\in\mathcal{X}^2(M)$ whose Schouten bracket satisfies
\[[\pi,\pi]=\chi_M.\]
This notion was first introduced in a series of papers by Alekseev, Malkin, and Meinrenken \cite{ale.mal.mei:98}, by Alekseev and Kosmann-Schwarzbach \cite{ale.kos:00}, and by Alekseev, Kosmann-Schwarzbach, and Meinrenken \cite{ale.kos.mei:02}. Viewed as a skew-symmetric bracket on the space of functions on $M$, a quasi-Poisson structure is a biderivation which satisfies a $\chi_M$-twisted version of the Jacobi identity. In particular, when the action of $G$ on $M$ is trivial, the Cartan trivector field $\chi_M$ vanishes and we recover the usual definition of a Poisson manifold.

Consider the map 
\begin{align*}
\sigma:\mathfrak{g}&\longrightarrow T_G^*\\
		\xi&\longmapsto \frac{1}{2}(\xi^R+\xi^L)^\vee,
		\end{align*}
where $\xi^R$ and $\xi^L$ are the right- and left-invariant vector fields induced by the Lie algebra element $\xi$, and $v^\vee\in T^*_G$ is the $1$-form corresponding to the vector field $v\in T_G$ under the isomorphism given by left-invariant bilinear form induced by $(\cdot,\cdot)$ on $T_G$. We let 
\[\sigma^\vee:T_G^*\longrightarrow \g\] 
be its adjoint. The quasi-Poisson manifold $(M,\pi)$ is \emph{Hamiltonian} if it is equipped with a $G$-equivariant map 
\[\Phi:M\longrightarrow G,\]
called a \emph{group-valued moment map}, which satisfies the condition
\begin{equation}
\label{mocondpi}
\pi^\#\circ\Phi^*=\rho_M\circ\sigma^\vee.
\end{equation}

The Hamiltonian quasi-Poisson manifold $(M,\pi,\Phi)$ is \emph{nondegenerate} if the bundle map
\begin{align}
\label{nondeg}
T^*_M\oplus\g&\longrightarrow T_M\\
		(\alpha,\xi)&\longmapsto \pi^\#(\alpha)+\xi_M\nonumber
		\end{align}
is surjective. In this case, M carries a \emph{quasi-Hamiltonian} $2$-form $\omega\in\Omega^2(M)$ which satisfies the compatibility condition
\begin{equation}
\label{compat}
\pi^\#\circ\omega^\flat=C,
\end{equation}
where 
\[C\coloneqq1-\frac{1}{4}\rho_M\circ\rho^\vee\circ\Phi_*\]
and 
\[\rho^\vee:T_G\longrightarrow\g\]
is the adjoint of the infinitesimal action of $G$ on itself by conjugation \cite[Lemma 10.2]{ale.kos.mei:02}. This $2$-form has the property that
\[d\omega=-\Phi^*\eta,\]
and the moment map condition \eqref{mocondpi} can be rewritten as
\begin{equation}
\label{mocond}
\omega^\flat\circ\rho_M=\Phi^*\circ\sigma.
\end{equation}
Moreover, if \eqref{nondeg} fails to be surjective, its image is an integrable generalized distribution and the quasi-Poisson manifold $M$ is foliated by quasi-Hamiltonian leaves.

%
%
%
%
%
%
%
\subsection{Quasi-Poisson bivectors as Dirac structures}
The quasi-Poisson structure on the $G$-manifold $(M, \pi,\Phi)$ is encoded \cite[Theorem 3.16]{bur.cra:05} by the $\Phi^*\eta$-twisted Dirac bundle
\[L=\{(\pi^\#(\alpha)+\xi_M,C^*(\alpha)+\Phi^*\sigma(\xi))\mid \alpha\in T^*_M, \,\xi\in\g\}.\]
The image of its projection onto the tangent component is the generalized distribution given by the map \eqref{nondeg}, and the nondegenerate leaves associated to this Dirac structure are precisely the quasi-Hamiltonian leaves. In particular, if $M$ is quasi-Hamiltonian with $2$-form $\omega\in\Omega^2(M)$, by \eqref{compat} and \eqref{mocond} this bundle can be writen
\[L=\{(X,\omega^\flat(X))\mid X\in T_M\}.\]

\begin{example}
\cite[Proposition 3.1]{ale.kos.mei:02} The group $G$ has a natural quasi-Poisson structure relative to the conjugation action, called the \emph{Cartan--Dirac structure}, whose moment map is the identity. The associated Dirac bundle \cite[Example 3.4]{bur.cra:05} is
\begin{equation}
\label{cd}
L_G=\left\{(\rho(\xi),\sigma(\xi))\mid \xi\in\g\right\}=\left\{(\xi^L-\xi^R,\sigma(\xi))\mid \xi\in\g\right\}.
	\end{equation}
Projecting onto the tangent component, we see that the nondegenerate leaves of this structure are the conjugacy classes. Therefore the Cartan--Dirac structure on $G$ can be seen as a multiplicative analogue of the classical Kirillov--Kostant--Souriau Poisson structure on $\g^*\cong\g$.
\end{example}

\begin{example}
\cite[Example 5.3]{ale.kos.mei:02} The \emph{internal fusion double} $\dg\coloneqq G\times G$ has a natural nondegenerate quasi-Poisson structure relative to the $G\times G$-action 
\[(g_1,g_2)(a,b)=(g_1ag_2^{-1},g_2bg_2^{-1}).\]
This structure is a multiplicative counterpart of the canonical symplectic structure on the cotangent bundle of $G$, viewed under the identification
\[T^*_G\cong G\times\g\]
induced by left-trivialization and by the invariant inner product. Its moment map is given by
\begin{align}
\label{phi}
\Phi:\,\,\,\dg\,\,\,&\longmapsto G\times G \\
(a,b)&\longmapsto(aba^{-1}, b^{-1}).\nonumber
\end{align}
\end{example}

Viewing the quasi-Poisson $G$-manifold $(M,\pi,\Phi)$ as a Dirac manifold with $\Phi^*\eta$-twisted Dirac structure $L$, the moment map $\Phi$ is a f-Dirac map which satisfies the additional nondegeneracy condition
\begin{equation}
\label{cond}
\ker\Phi_*\cap L=0.
\end{equation}
In fact, this property completely characterizes Hamiltonian quasi-Poisson manifolds \cite[Proposition 3.20]{bur.cra:05}---suppose that $(M,L)$ is a Dirac manifold and that
\[\Phi:(M,L)\longrightarrow (G,L_G)\]
is a f-Dirac map which satisfies \eqref{cond}. Then, for any $(\rho(\xi),\sigma(\xi))\in L_G$, there is a unique pair $(X_\xi,\alpha_\xi)\in L$ such that 
\[\Phi_*X_\xi=\rho(\xi)\qquad\text{and}\qquad \Phi^*\sigma(\xi)=\alpha_\xi.\]
This gives an infinitesimal action of the Lie algebra $\g$ on $M$ via
\begin{align}
\label{induced}
\rho_M:\g&\longrightarrow\mathcal{X}(M)\\
\xi&\longmapsto X_\xi.\nonumber
\end{align}
Moreover, for any $\alpha\in T^*M$, there is a unique $X_\alpha\in T_M$ such that 
\[\Phi_*X_\alpha=\sigma^{\vee*}\rho_M^*(\alpha)\qquad\text{and}\qquad (X_\alpha,C^*(\alpha))\in L.\]
The map
\begin{align*}
\pi^\#: T^*_M&\longrightarrow T_M\\
		\alpha&\longmapsto X_\alpha
		\end{align*}
defines a quasi-Poisson bivector $\pi\in\mathcal{X}^2(M)$ relative to the action \eqref{induced}, whose associated moment map is $\Phi$.\\

%
%
%
%
%
%
%
\section{The multiplicative Grothendieck--Springer resolution}
\subsection{The Grothendieck--Springer space}
From now on let $G$ be a reductive complex group, so that the nondegenerate bilinear form on every simple factor of $\g$ is given by a scalar multiple of the Killing form, and once again write $\B$ for the flag variety of all Borel subgroups. The \emph{Grothendieck--Springer simultaneous resolution} of $G$ is the variety of pairs
\[\Gt\coloneqq\{(g',B')\in G\times\B\mid g'\in B'\}.\]
There is a natural map 
\begin{equation}
\label{mu}
\mu:\Gt\longrightarrow G
\end{equation}
given by the first projection, whose general fiber is finite---when $g'\in G$ is regular and semisimple, the points of $\mu^{-1}(g')$ are permuted freely and transitively by $W$, the Weyl group of $G$. 

Fixing a Borel subgroup $B$ of $G$, the isomorphism
\begin{align*}
G\times_BB&\xlongrightarrow{\sim}\Gt \\
	[g:b]&\longmapsto (gbg^{-1}, gBg^{-1})
	\end{align*}
realizes $\Gt$ as a fiber bundle over $G/B\cong\B$. Let $T$ be a maximal torus of $B$ and let $U=[B,B]$ be its unipotent radical, so that we have a splitting $B=TU$. Since $B$ stabilizes each coset $tU$, there is a well-defined map
\begin{align*}
\lambda: G\times_BB&\longrightarrow T \\
				[g:tu]&\longmapsto t
				\end{align*}
whose fibers
\[\lambda^{-1}(t)=G\times_B tU\]
are multiplicative analogues of twisted cotangent bundles over $G/B$.

Let $G\!\sslash\! G=\spec\mathbb{C}[G]^G$ be the adjoint quotient of $G$. By the Chevalley theorem, the restriction map gives an isomorphism of algebras
\[\mathbb{C}[G]^G\cong\mathbb{C}[T]^W\]
and therefore an identification of varieties
\[G\!\sslash\! G\cong T/W.\]
The two quotient maps 
\[G\longrightarrow G\!\sslash\! G\qquad\text{ and }\qquad T\longrightarrow T/W\]
fit into a commutative diagram
\begin{equation}
\label{gs}
\begin{tikzcd}[row sep=large]
&\Gt\arrow[ld,swap,"\mu"]\arrow[rd, "\lambda"]&\\
G\arrow[rd,swap,"\kappa"]&&T\arrow[ld]\\
&T/W&
\end{tikzcd}
\end{equation}
whose restriction to the regular locus of $G$ is Cartesian.

For any $t\in T$, let $\bar{t}$ be its image in $T/W$. The fibers of $\kappa$, which are irreducible subvarieties of $G$ of codimension equal to the rank, are called \emph{Steinberg fibers}, and each is a union of finitely many conjugacy classes. In particular, the Steinberg fiber
\[F_{t}\coloneqq\kappa^{-1}(\bar{t})\]
contains a unique regular conjugacy class, which is open and dense, and the unique semisimple conjugacy class $G\cdot t$, which is closed and of minimal dimension \cite[Theorem 6.11 and Remark 6.15]{ste:65}. In particular, if $t\in T$ is regular, then the fiber $F_{t}$ is simply the orbit $G\cdot t$ and therefore it is a smooth variety isomorphic to the quotient $G/T.$ 

When $t\in T$ is not regular, the Steinberg fiber $F_{t}$ is generally singular. Diagram \eqref{gs} gives a natural surjection $\lambda^{-1}(t)\longrightarrow\kappa^{-1}(\bar{t})$, and we obtain a proper birational map
\[G\times_BtU\longrightarrow F_t\]
which is a resolution of singularities. In the special case of the identity element $1\in T$, the Steinberg fiber $\mathcal{U}\coloneqq F_1$ is the variety of all unipotent elements of $G$, and this birational map is precisely the Springer resolution
\[G\times_BU\longrightarrow\mathcal{U}.\]
%

%
%
%
%
%
%
%
\subsection{A quasi-Poisson structure on $\Gt$}
In this section we will show that $G\times_BB$ carries a natural quasi-Poisson structure for the left action of $G$ given by
\begin{equation}
\label{residual}
h\cdot[g:b]=[hg:b].
\end{equation}
Consider the diagram
\begin{equation}
\label{bigdiag}
\begin{tikzcd}[row sep=large]
		&G\times B\arrow[ld, swap, "q"]\arrow[rr, hook, "\jmath"] \arrow[rd, "\Phi"]		&	&\dg\arrow[rd, "\Phi"] &\\
G\times_BB\arrow[rd, swap, "\mu"] 	&&G\times B\arrow[ld, "p"]\arrow[rr, hook, "\imath"] 		&& G\times G \\
							&G, &&&			
\end{tikzcd}
\end{equation}
where $p$ and $q$ are the natural quotient maps, $\mu$ is the map defined in \eqref{mu}, and $\Phi$ is the quasi-Poisson moment map of the internal fusion double given by \eqref{phi}.

The bundle $\jmath^*L_{\dg}$ is the graph of the two-form $\jmath^*\omega$, and is therefore a Dirac structure on $G\times B$. To check that it descends to the desired Dirac structure on the Grothendieck--Springer resolution, we will begin with the following simple lemma.

\begin{lemma}
\label{kernel}
Let $\xi\in\mathfrak{b}$. Then $T_B\subset\ker\sigma(\xi)$ if and only if $\xi\in\mathfrak{u}$.
\end{lemma}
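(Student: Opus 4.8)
The plan is to compute the one-form $\sigma(\xi)$ explicitly in the left trivialization of $T_G$ and thereby reduce the condition $T_B\subset\ker\sigma(\xi)$ to a statement about orthogonal complements inside $\g$. First I would express the invariant vector fields in left coordinates: identifying $T_gG\cong\g$ via $(L_g)_*$, the left-invariant field $\xi^L$ is the constant section $\xi$, while the right-invariant field satisfies $\xi^R_g=(L_g)_*(\Ad_{g^{-1}}\xi)$. Hence $\tfrac12(\xi^R+\xi^L)$ corresponds to the section $g\mapsto\tfrac12(\Ad_{g^{-1}}\xi+\xi)$, and since $v^\vee$ is formed using the left-invariant extension of $(\cdot,\cdot)$, for every $Y\in\g$ we obtain
\[
\langle\sigma(\xi)_g,(L_g)_*Y\rangle=\tfrac12\bigl(\Ad_{g^{-1}}\xi+\xi,\,Y\bigr).
\]

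Next I would use that $B$ is a subgroup, so $T_bB=(L_b)_*\b$ for every $b\in B$. With the formula above, $T_B\subset\ker\sigma(\xi)$ is then equivalent to requiring $\tfrac12(\Ad_{b^{-1}}\xi+\xi)$ to be orthogonal to $\b$ for all $b\in B$; that is, $\Ad_{b^{-1}}\xi+\xi\in\b^\perp$ for all $b\in B$. At this point I would invoke the standard fact that, with respect to the invariant form, the orthogonal complement of $\b$ is exactly its nilpotent radical, $\b^\perp=\uu$. This follows from the root-space decomposition $\b=\t\oplus\bigoplus_{\alpha>0}\g_\alpha$, since the form is nondegenerate on $\t$, pairs $\g_\alpha$ with $\g_{-\alpha}$, and kills all other pairs of summands.

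Finally, with the condition rewritten as $\Ad_{b^{-1}}\xi+\xi\in\uu$ for all $b\in B$, both implications are immediate. Setting $b=1$ gives $2\xi\in\uu$, hence $\xi\in\uu$, which proves necessity. For sufficiency I would use that $\uu$ is $\Ad_B$-invariant, being the Lie algebra of the normal subgroup $U\trianglelefteq B$; thus if $\xi\in\uu$ then both $\Ad_{b^{-1}}\xi$ and $\xi$ lie in $\uu=\b^\perp$, so their average annihilates $\b$. The argument is a short computation, and I do not expect a genuine obstacle: the only points that require care are the appearance of the $\Ad_{g^{-1}}$ factor when the right-invariant field is written in the left trivialization, and the identification $\b^\perp=\uu$.
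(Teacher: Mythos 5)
Your proof is correct, and its first half follows the same route as the paper's: express $\sigma(\xi)$ against tangent vectors of $B$ via an invariant trivialization and reduce everything to the orthogonality fact $\b^\perp=\uu$ (which you prove from the root-space decomposition, while the paper simply cites it). The paper spans $T_{tu}B$ by right-invariant fields $x^R$ with $x\in\b$, obtaining the condition $(\xi+\Ad_{tu}\xi,x)=0$ for all $x\in\b$; your left-trivialized condition $(\Ad_{b^{-1}}\xi+\xi,Y)=0$ for all $Y\in\b$ is equivalent to it by $\Ad$-invariance of the form, so this difference is cosmetic. The genuine difference is the endgame. You get necessity by evaluating at the single point $b=1$, so what you prove is exactly the bundle-level statement: if $\sigma(\xi)$ annihilates $T_bB$ for \emph{every} $b\in B$, then $\xi\in\uu$. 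The paper instead fixes an \emph{arbitrary} point $tu\in B$, writes $\xi=s+n\in\t\oplus\uu$, and shows that the condition at that one point is already equivalent to $s=0$, the key input being that $\Ad_{tu}\xi\equiv s \bmod \uu$ (i.e.\ $B$ acts trivially on $\b/\uu$). This pointwise refinement is strictly stronger than the lemma as stated, and it is the version the paper actually invokes later: in Proposition \ref{regact} and Theorem \ref{main2} the lemma is applied fiberwise, to compute generalized distributions such as $\rho_{\dg}(0\oplus\b)\cap\jmath^*L_{\dg}$ point by point, and the constant-rank (regularity) conclusion requires that the set of admissible $\xi$ equal $\uu$ at each individual point, not merely that $\uu$ be the set of $\xi$ that work at all points simultaneously. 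Your argument upgrades to the pointwise statement with one extra line — in your notation, $\Ad_{b^{-1}}\xi+\xi\equiv 2s \bmod \uu$ where $s$ is the $\t$-component of $\xi$, so the condition at any single $b$ already forces $s=0$ — and you would want to add that line for the lemma to serve the role it plays in the rest of the paper.
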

\begin{proof}
Fix a point $tu\in B$, with semisimple part $t\in T$ and unipotent part $u\in U$, and write $\xi=s+n\in\mathfrak{t}\oplus\mathfrak{u}$. Any vector in $T_{tu}B$ is of the form $x^R$ for some $x\in\mathfrak{b}$, and we have
\begin{align*}
x^R\in\ker(\xi^R+\xi^L)^\vee\quad\text{for all }x\in\mathfrak{b}\quad&\iff\quad (\xi+\Ad_{tu}\xi,x)=0\quad\text{for all }x\in\mathfrak{b}\\
												&\iff\quad \xi+\Ad_{tu}\xi\in\mathfrak{u}\\
												&\iff\quad s=0,
												\end{align*}
where the second equivalence follows from the fact that $\mathfrak{b}^\perp=\mathfrak{u}$ under the Killing form.
\end{proof}

\begin{proposition}
\label{regact}
The action of $B$ on $G\times B$ defined by
\[h\cdot(g,b)=(gh^{-1},hbh^{-1})\]
for $h\in B$ and $(g,b)\in G\times B$ is a regular Dirac action with respect to the Dirac structure $\jmath^*L_{\dg}$.
\end{proposition}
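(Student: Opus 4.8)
The plan is to verify directly the two requirements bundled into the notion of a regular Dirac action: that each $h \in B$ acts by a Dirac diffeomorphism, and that the infinitesimal-action distribution $\rho_{G\times B}(\mathfrak{b}) \cap \jmath^*L_{\dg} \subset T_{G\times B}$ has locally constant rank. The first thing I would record is that the given action is nothing but the restriction of a subaction on the double: taking $(g_1,g_2) = (1,h)$ in the $G\times G$-action on $\dg$ gives $(1,h)\cdot(a,b) = (ah^{-1}, hbh^{-1})$, which matches $h\cdot(g,b)=(gh^{-1},hbh^{-1})$ and preserves the submanifold $\jmath(G\times B)$ since $B$ is stable under conjugation by its own elements. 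Hence $\jmath$ is $B$-equivariant and $\jmath_*\xi_{G\times B} = (0,\xi)_{\dg}$ for all $\xi\in\mathfrak{b}$.

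The Dirac diffeomorphism property is then immediate. Because $\jmath^*L_{\dg}$ is the graph of $\jmath^*\omega$, a diffeomorphism of $G\times B$ is Dirac exactly when it preserves the $2$-form $\jmath^*\omega$; and the $B$-action preserves $\jmath^*\omega$ since it is the restriction of the $\{1\}\times B$-action, which preserves the $G\times G$-invariant quasi-Hamiltonian form $\omega$ on $\dg$.

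The main content is the rank computation. As $\jmath^*L_{\dg}$ is nondegenerate, a point of $\rho_{G\times B}(\mathfrak{b}) \cap \jmath^*L_{\dg}$ is a vector $\xi_{G\times B}$ with $\xi\in\mathfrak{b}$ satisfying $\iota_{\xi_{G\times B}}\jmath^*\omega = 0$. I would evaluate this contraction by passing to $\dg$: using $\jmath_*\xi_{G\times B}=(0,\xi)_{\dg}$ and the moment map identity \eqref{mocond} for $\dg$, one gets $\iota_{\xi_{G\times B}}\jmath^*\omega = \jmath^*\iota_{(0,\xi)_{\dg}}\omega = \jmath^*\Phi^*\sigma(0,\xi)$, in which only the second $G$-factor of the target contributes. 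Tracking this factor through \eqref{bigdiag}, the relevant composite is $(g,b)\mapsto b^{-1}$, whose differential has image $T_{b^{-1}}B$; therefore the contraction vanishes if and only if $T_{b^{-1}}B \subset \ker\sigma(\xi)$. By Lemma \ref{kernel} this holds precisely when $\xi\in\mathfrak{u}$, so that $\rho_{G\times B}(\mathfrak{b}) \cap \jmath^*L_{\dg} = \rho_{G\times B}(\mathfrak{u})$.

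To finish, I would note that the action is infinitesimally free: its first-factor component $g\mapsto gh^{-1}$ is already free, so $\xi_{G\times B}=0$ forces $\xi=0$. Thus $\rho_{G\times B}$ is injective on $\mathfrak{u}$ at every point and the distribution $\rho_{G\times B}(\mathfrak{u})$ has constant rank $\dim\mathfrak{u}$, which is exactly the regularity condition. I expect the sole obstacle to be the third-step computation: carrying the contraction $\iota_{\xi_{G\times B}}\jmath^*\omega$ through the moment map condition and the inclusion with the correct $\Ad$-bookkeeping, so that it lands on the hypothesis of Lemma \ref{kernel}; once the relevant tangent space is identified as $T_{b^{-1}}B$, the lemma does the rest.
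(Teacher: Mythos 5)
Your proposal is correct and follows essentially the same route as the paper: the Dirac property is inherited from the $\{1\}\times B$ sub-action on $\dg$, the intersection $\rho_{G\times B}(\mathfrak{b})\cap\jmath^*L_{\dg}$ is identified with $\rho_{G\times B}(\mathfrak{u})$ via the moment map condition \eqref{mocond} and Lemma \ref{kernel}, and freeness of the action gives constant rank. Your version merely makes explicit some bookkeeping the paper leaves terse (the $b\mapsto b^{-1}$ component of $\Phi$ and the pointwise injectivity of $\rho_{G\times B}$), which is a faithful filling-in rather than a different argument.
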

\begin{proof}
Since the second copy of $G$ acts on $\dg$ by Dirac automorphisms, the same is true for the action of $B$ on the submanifold $G\times B$. It remains only to show that this action is regular. 

We have
\begin{align*}
\rho_{\dg}(0\oplus\mathfrak{b})\cap \jmath^*L_{\dg}&=\{\rho_{\dg}(0,\xi)\mid \xi\in\mathfrak{b}\text{ and }T_{G\times B}\subset\ker\Phi^*\sigma(0,\xi)\}\\
			&=\{\rho_{\dg}(0,\xi)\mid \xi\in\mathfrak{b}\text{ and }T_{G\times B}\subset\ker\sigma(0,\xi)\}\\
			&=\{\rho_{\dg}(0,\xi)\mid \xi\in\mathfrak{u}\},
\end{align*}
where the last equality follows from Lemma \ref{kernel}. Since the action of $B$ on $G\times B$ has trivial stabilizers, this implies that the intersection $\rho_{\dg}(0\oplus\mathfrak{b})\cap \jmath^*L_{\dg}$ has constant rank and therefore that this action is regular.
\end{proof}

Because the Borel subalgebra satisfies %
\[\b^\perp=[\b,\b],\]
the restriction of the Cartan $3$-form $\eta$ to the subgroup $B$ vanishes. Chasing diagram \eqref{bigdiag}, we see that
\begin{align*}
q^*\mu^*\eta&=\Phi^*p^*\eta \\
			&=\Phi^*\imath^*(\eta_1,\eta_2) \\
			&=\jmath^*\Phi^*(\eta_1,\eta_2),
			\end{align*}
so the twist of the Dirac structure $\jmath^*L_{\dg}$ is in the image of $q^*$. By \cite[Proposition 1.13]{bur:13}, Proposition \ref{regact} then implies that the pushforward 
\[L_{\Gt}\coloneqq q_*\jmath^*L_{\dg}\]
is a smooth vector bundle and therefore a $\mu^*\eta$-twisted Dirac structure on $\Gt$. We now show that it corresponds to a quasi-Poisson bivector.

\begin{theorem}
\label{main1}
The Dirac structure $L_{\Gt}$ is induced by a quasi-Poisson structure on $G\times_BB$ whose moment map is $\mu$. 
\end{theorem}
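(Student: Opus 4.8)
The plan is to appeal to the converse of the Bursztyn--Crainic correspondence recorded above \cite[Proposition 3.20]{bur.cra:05}. Since $L_{\Gt}$ is already known to be a smooth $\mu^*\eta$-twisted Dirac structure, it is induced by a quasi-Poisson bivector with group-valued moment map $\mu$ as soon as I verify the two conditions
\begin{equation*}
\mu\colon(\Gt,L_{\Gt})\longrightarrow(G,L_G)\text{ is forward-Dirac}\qquad\text{and}\qquad\ker\mu_*\cap L_{\Gt}=0.
\end{equation*}
I establish these in turn.

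For the forward-Dirac property I first lift the computation to the cover $q\colon G\times B\to\Gt$. Because the pushforward of Dirac structures is functorial, the construction $L_{\Gt}=q_*\jmath^*L_{\dg}$ gives
\begin{equation*}
\mu_*L_{\Gt}=\mu_*q_*\jmath^*L_{\dg}=(\mu\circ q)_*\jmath^*L_{\dg},
\end{equation*}
and the commutativity of diagram \eqref{bigdiag} identifies $\mu\circ q=\operatorname{pr}_1\circ\Phi\circ\jmath$, where $\operatorname{pr}_1\colon G\times G\to G$ is the first projection. The map $\operatorname{pr}_1\circ\Phi$ is forward-Dirac onto $(G,L_G)$: it is the composite of the moment map $\Phi$, which is forward-Dirac onto $G\times G$ with its product Cartan--Dirac structure by construction of the double, with $\operatorname{pr}_1$, and one checks directly that projection from a product of Cartan--Dirac structures onto a factor is forward-Dirac. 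The content of the statement is therefore that restricting the domain to the submanifold $G\times B=\Phi_2^{-1}(B)$, where $\Phi_2\colon\dg\to G$, $(a,b)\mapsto b^{-1}$, is the second component of $\Phi$, and then pushing forward along $\operatorname{pr}_1\circ\Phi$ still recovers all of $L_G$; that is,
\begin{equation*}
(\operatorname{pr}_1\circ\Phi\circ\jmath)_*\jmath^*L_{\dg}=L_G.
\end{equation*}

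This last identity is the main obstacle, since it is exactly where the geometry of $B$ enters rather than formal properties of Dirac maps, and both inclusions have content: surjectivity onto $L_G$ requires lifting generators of $L_G$, while the reverse inclusion requires controlling the covectors annihilating $T_{G\times B}$ that appear when one restricts from $\dg$. I expect to treat both using Lemma \ref{kernel} together with the relation $\b^\perp=\uu$: the directions discarded upon restricting from $\dg$ to $G\times B$ are precisely the infinitesimal $\uu$-directions $\rho_{\dg}(0\oplus\uu)$ of the second $G$-action, and by Lemma \ref{kernel} these lie in $\ker\Phi^*\sigma$ along $B$, so that the restriction loses no covector information relevant to $L_G$. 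Over each point $gbg^{-1}$ the required lift of a generator $(\rho(\xi),\sigma(\xi))\in L_G$ is then supplied by the moment map condition for $\Phi$ on the double, and surjectivity of $\mu$ (every element of $G$ lies in a conjugate of $B$) ensures this works over all of $G$.

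Finally, for the nondegeneracy condition I argue that $\ker\mu_*\cap L_{\Gt}=0$ descends from the corresponding condition $\ker\Phi_*\cap L_{\dg}=0$ on the double. A pair in $\ker\mu_*\cap L_{\Gt}$ lifts along $q$ to an element of $\jmath^*L_{\dg}$ whose tangent part is annihilated by $(\operatorname{pr}_1\circ\Phi)_*$ modulo the $B$-orbit directions; by Proposition \ref{regact} the only such orbit directions inside $\jmath^*L_{\dg}$ are the regular directions $\rho_{\dg}(0\oplus\uu)$, and combining this with $\ker\Phi_*\cap L_{\dg}=0$ forces the pair to vanish. I expect this part to be comparatively routine once the forward-Dirac identity is in hand.
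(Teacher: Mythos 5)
Your skeleton agrees with the paper's: both arguments invoke the Bursztyn--Crainic criterion \cite[Proposition 3.20]{bur.cra:05}, reducing everything to showing that $\mu$ is f-Dirac and that $\ker\mu_*\cap L_{\Gt}=0$, and both lift the computation to $G\times B$ via $\mu\circ q=p\circ\Phi\circ\jmath$. But at the step you yourself flag as ``the main obstacle''---the identity $(\operatorname{pr}_1\circ\Phi\circ\jmath)_*\jmath^*L_{\dg}=L_G$---your proposal stops being a proof and becomes a sketch, and the sketch is not correct as stated. It conflates vectors with covectors: the vector fields $\rho_{\dg}(0\oplus\uu)$ are \emph{tangent} to $G\times B$ (conjugation by $B$ preserves $B$), so they are not ``directions discarded upon restricting from $\dg$ to $G\times B$''; what restriction actually costs you is control over covectors, namely that an element of $\jmath^*L_{\dg}$ only remembers its $L_{\dg}$-covector modulo the annihilator $T^\circ_{G\times B}$. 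Controlling those annihilator covectors is exactly the content of the hard inclusion of the pushforward into $L_G$, and Lemma \ref{kernel} alone does not give it. The paper disposes of this with a genuinely non-formal input, the commutation $\imath^*\Phi_*L_{\dg}=\Phi_*\jmath^*L_{\dg}$ of \cite[Lemma 1.19]{bal:22}, after which $\mu_*L_{\Gt}=p_*\imath^*L_{G\times G}=L_G$ follows easily; pullback and pushforward of Dirac structures do not commute in general, and your proposal contains no replacement for this lemma. (The easy inclusion---lifting the generators $(\rho(\xi),\sigma(\xi))$ via $(\rho_{\dg}(\xi,0),\Phi^*\sigma(\xi,0))\in L_{\dg}$---is fine, and is essentially the paper's verification that the induced action is \eqref{residual}.)

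The nondegeneracy argument is also broken as stated. What must be proved is \eqref{goal}: every $(X,0)\in\jmath^*L_{\dg}$ with $p_*\Phi_*X=0$ lies in $\ker q_*=\rho_{\dg}(0\oplus\b)$. Proposition \ref{regact} runs in the wrong logical direction for this: it tells you which orbit directions lie \emph{inside} $\jmath^*L_{\dg}$, whereas here you must show that a given element of $\jmath^*L_{\dg}$ \emph{is} an orbit direction. Moreover, $\ker\Phi_*\cap L_{\dg}=0$ cannot be applied to $X$ directly, since only $p_*\Phi_*X$ vanishes and not $\Phi_*X$; correspondingly, the correct conclusion is not that the pair vanishes outright but that $X=\rho_{\dg}(0,\xi)$ for some $\xi\in\b$, whence $q_*X=0$. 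The paper gets there by constructing this comparison element: from $(X,0)\in\jmath^*L_{\dg}$ it produces $\alpha\in T^\circ_{G\times B}$ with $(X,\alpha)\in L_{\dg}$, uses the identity $T^\circ_{G\times B}=\Phi^*T^\circ_{G\times B}$ together with the f-Dirac property of $\Phi$ and the explicit form \eqref{cd} of the Cartan--Dirac structure to conclude that $\Phi_*X=\rho(0,\xi)$ with matching covector $\sigma(0,\xi)$, $\xi\in\b$, and only then applies nondegeneracy of the double to the difference $X-\rho_{\dg}(0,\xi)$. Both that construction and the annihilator identity it rests on are absent from your proposal. Finally, note that the paper also verifies that the $\g$-action induced by \cite[Proposition 3.20]{bur.cra:05} coincides with \eqref{residual}; without this, you have produced \emph{a} quasi-Poisson structure for some induced action, not the one advertised in the section.
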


\begin{proof}
To show that $L_{\Gt}$ defines a quasi-Poisson structure with moment map $\mu$, we must show (i) that $\mu$ is a f-Dirac map which satisfies the nondegeneracy condition \eqref{cond}, and (ii) that the induced $G$-action on $G\times_BB$ coincides with the action given in \eqref{residual}. 

(i) To see that $\mu$ is f-Dirac, we use diagram \eqref{bigdiag} to compute
\begin{align*}
L_G&=p_{*}\imath^*\Phi_*L_{\dg} \\
	&=p_{*}\Phi_*\jmath^*L_{\dg}\\
	&=\mu_*q_*\jmath^*L_{\dg}=\mu_*L_{\Gt},
	\end{align*}
where the second equality follows from \cite[Lemma 1.19]{bal:22}. Checking the nondegeneracy condition
\[\ker\mu_*\cap L_{\Gt}=0,\]
is equivalent to showing that
\begin{equation}
\label{goal}
\ker(p_{*}\Phi_*)\cap \jmath^*L_{\dg}\subset \rho_{\dg}(0\oplus\mathfrak{b}).
\end{equation}
Suppose that $(X,0)\in \jmath^*L_{\dg}$ satisfies $p_{*}\Phi_*(X)=0$. Then there is a $1$-form $\alpha\in T_{G\times B}^\circ$ such that
\[(X,\alpha)\in L_{\dg}.\]
Moreover, since $T_{G\times B}^\circ=\Phi^*T_{G\times B}^\circ$ and since $\Phi$ is f-Dirac, there is a further $\beta\in T_{G\times B}^\circ$ such that $(\Phi_*X,\beta)$ is an element of the Cartan--Dirac structure $L_{G\times G}$. Since $p_{*}\Phi_*X=0$ and since $X$ is tangent to $G\times B$, we obtain
\[\Phi_*X=\rho(0,\xi)\]
for some $\xi\in\mathfrak{b}$. By \eqref{cd}, this means that $\beta=\sigma(0,\xi)$ and we get
\[(X,\Phi^*\beta)\in L_{\dg} \qquad\text{and}\qquad (\rho_{\dg}(0,\xi),\Phi^*\beta)\in L_{\dg}.\]
Since $\Phi_*X=\Phi_*\rho_{\dg}(0,\xi),$ the nondegeneracy condition \eqref{nondeg} applied to the moment map $\Phi$ implies that $X=\rho_{\dg}(0,\xi),$ proving \eqref{goal}.

It remains to show that the infinitesimal action of $\g$ on $G\times_BB$ induced by the map $\mu$ agrees with the action defined in \eqref{residual}. For this, let $\xi\in\g$ and notice that, since $L_{\dg}$ is quasi-Poisson for the action of $G\times G$ on $\dg$, we have
\[(\rho_{\dg}(\xi,0),\Phi^*\sigma(\xi,0))\in L_{\dg}.\]
Since $\rho_{\dg}(\xi,0)$ is tangent to $G\times B$, it follows that
\[(\rho_{\dg}(\xi,0),\jmath^*\Phi^*\sigma(\xi,0))\in L_{G\times B}.\]
Moreover, by chasing diagram \eqref{bigdiag} we see that
\[\jmath^*\Phi^*\sigma(\xi,0)=\Phi^*\imath^*\sigma(\xi,0)=\Phi^*p^*\sigma(\xi)=q^*\mu^*\sigma(\xi),\]
and we get
\[(q_*\rho_{\dg}(\xi,0),\mu^*\sigma(\xi))\in L_{\Gt}.\]
Since $\mu_*q_*\rho_{\dg}(0,\xi)=\rho(0,\xi)$, by \eqref{induced} this implies that $\xi$ acts on $G\times_BB$ by the vector field $q_*\rho_{\dg}(\xi,0)$, which is precisely the vector field corresponding to $\xi$ under the action \eqref{residual}.
\end{proof}

\begin{theorem}
\label{main2}
The quasi-Hamiltonian leaves of $L_{\Gt}$ are the twisted unipotent bundles 
\[G\times_BtU\quad \text{for }t\in T.\]
\end{theorem}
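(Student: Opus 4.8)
The plan is to identify the quasi-Hamiltonian leaves as the nondegenerate leaves of the Dirac structure $L_{\Gt}$, which by the discussion following the definition of the moment map are exactly the leaves integrating the image of the anchor map $p_T$. Since $L_{\Gt}=q_*\jmath^*L_{\dg}$ is a pushforward under the submersion $q\colon G\times B\to G\times_BB$, the tangent distribution of $L_{\Gt}$ at a point $[g:b]$ is the image under $q_*$ of the tangent distribution of $\jmath^*L_{\dg}$ at $(g,b)$, modulo the vertical directions of $q$. Thus the first step is to compute the tangent distribution $p_T(\jmath^*L_{\dg})\subset T_{G\times B}$ of the presymplectic Dirac structure on $G\times B$. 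Because $\jmath^*L_{\dg}$ is the graph of the pulled-back two-form $\jmath^*\omega$, its tangent projection is all of $T_{G\times B}$; however, that is not the relevant distribution---what I want is the \emph{nondegenerate leaf} distribution of $L_{\Gt}$, which is the image of the map \eqref{nondeg} for the quasi-Poisson structure, namely $\pi^\#(T^*_{\Gt})+\rho_{\Gt}(\g)$.

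Concretely, I would compute this distribution directly. The $G$-action \eqref{residual} contributes the tangent vectors $\rho_{\Gt}(\g)$, which at $[g:b]$ span the image of the full $\g$-action, i.e. the tangent space to the $G$-orbit. The Hamiltonian directions $\pi^\#(T^*_{\Gt})$ can be read off from the moment map condition \eqref{mocondpi}: since $\mu$ is the moment map, $\pi^\#\circ\mu^*=\rho_{\Gt}\circ\sigma^\vee$, so the image of $\pi^\#$ on the subspace $\mu^*(T^*_G)$ already lies inside $\rho_{\Gt}(\g)$. The key point is therefore to understand the directions in which the leaf can move transversally to the $G$-orbit, and to show these are exactly the directions tangent to the fibers $G\times_BtU=\lambda^{-1}(t)$. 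The natural way to see this is via the composite $\lambda\colon G\times_BB\to T$ from diagram \eqref{gs}: I would show that $\lambda$ is constant along every nondegenerate leaf, equivalently that $\lambda_*$ annihilates the anchor distribution of $L_{\Gt}$.

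To prove that $\lambda$ is a quasi-Poisson invariant, I would trace through the construction. Upstairs on $G\times B$, the map $(g,b)\mapsto$ (semisimple part of $b$) factors the $T$-component, and I would show that its differential kills the tangent distribution of $\jmath^*L_{\dg}$ up to the $B$-action directions $\rho_{\dg}(0\oplus\b)$ that are quotiented out by $q$. By Lemma \ref{kernel} and Proposition \ref{regact}, the $B$-directions retained in the leaf correspond precisely to $\rho_{\dg}(0\oplus\uu)$, the unipotent part, which moves a point within a single coset $tU$ and hence does not change $t$. Combined with the observation that conjugation by $G$ (the residual action \eqref{residual}) preserves the conjugacy class and hence the value of $\kappa\circ\mu=(\text{projection to }T/W)\circ\lambda$, this forces each leaf into a single fiber $\lambda^{-1}(t)=G\times_BtU$.

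The reverse inclusion---that each $G\times_BtU$ is a \emph{single} nondegenerate leaf rather than a union of several---requires showing the anchor distribution of $L_{\Gt}$ has rank equal to $\dim(G\times_BtU)=\dim G/B+\dim U=\dim G-\operatorname{rank}G$ at every point, i.e. that the leaves have the expected dimension and that $\lambda$ is a submersion whose fibers are connected. I expect the main obstacle to be this dimension-count and connectedness step: one must verify that the Hamiltonian plus infinitesimal-action directions fill out the entire tangent space to $G\times_BtU$ at every point, including non-regular $t$ where the $G$-orbit structure of $F_t$ degenerates. I would handle this by computing the rank of the map \eqref{nondeg} fiberwise using the explicit description of $L_{\Gt}$ from the pushforward, reducing to a linear-algebra statement about $\Ad_{tu}$ on $\g$ analogous to Lemma \ref{kernel}, and invoking the fact that $G\times_BtU$ is connected (being a fiber bundle over the connected base $G/B$ with connected fiber $tU\cong U$) to conclude that the regular foliation has exactly these bundles as its leaves.
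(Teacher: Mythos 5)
Your overall strategy (each leaf lies in a fiber of $\lambda$; each fiber is a single leaf by a rank count plus connectedness) could in principle be made to work, but as written the proposal has two genuine problems. First, your opening claim that ``the tangent distribution of $L_{\Gt}$ at a point $[g:b]$ is the image under $q_*$ of the tangent distribution of $\jmath^*L_{\dg}$ at $(g,b)$, modulo the vertical directions of $q$'' is false: since $\jmath^*L_{\dg}$ is the graph of $\jmath^*\omega$, its tangent projection is all of $T_{G\times B}$, and the $q_*$-image of that is all of $T_{\Gt}$, which is certainly not the anchor distribution of $L_{\Gt}$. Pushforward of Dirac structures does not commute with taking anchor images in this naive way; the correct statement, which is the engine of the paper's proof, is that $q_*X\in p_T(L_{\Gt})$ if and only if $\jmath^*\omega^\flat(X)\in\im q^*$, i.e.\ if and only if $\omega(X,\rho_{\dg}(0,\xi))=0$ for all $\xi\in\b$ --- the covector paired with $X$ must be basic. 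You notice that the graph projection is ``not the relevant distribution,'' but you never replace it with this criterion; instead you pivot to computing $\pi^\#(T^*_{\Gt})+\rho_{\Gt}(\g)$ downstairs, yet the only access to $\pi^\#$ is through the pushforward description you just set aside, so the pivot returns you to the same computation without the right tool in hand.

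Second, the heart of the proof is exactly the step you defer as ``a linear-algebra statement about $\Ad_{tu}$ on $\g$ analogous to Lemma \ref{kernel}.'' The paper converts the basic-ness condition, via the moment map identity \eqref{mocond}, into $\Phi_*X\in\ker\sigma(0,\xi)$ for all $\xi\in\b$, and then runs the Killing-form computation ($(\xi,x+\Ad_{tu}^{-1}x)=0$ for all $\xi\in\b$ iff $x\in\uu$, a mirror image of Lemma \ref{kernel}) to conclude that the $B$-component of $X$ is tangent to the coset $tU$. This one computation simultaneously establishes both halves of your plan: it shows the anchor distribution is exactly $q_*T_{G\times tU}=T_{G\times_BtU}$, so leaves lie in $\lambda$-fibers \emph{and} have full rank at every point, and connectedness of $G\times_BtU$ finishes the argument. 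It cannot be waved through: at a point $[g:1]$ the $G$-orbit under \eqref{residual} has dimension only $\dim G/B$, strictly smaller than $\dim G\times_BU$, so the missing directions must come from $\pi^\#$ applied to covectors not of the form $\mu^*\alpha$, about which the moment map condition \eqref{mocondpi} says nothing. Relatedly, your appeal to Proposition \ref{regact} conflates the intersection $\rho_{\dg}(0\oplus\b)\cap\jmath^*L_{\dg}$ (vertical vectors paired with the \emph{zero} covector, which is what regularity of the $B$-action requires) with the determination of which directions survive into the anchor distribution of the pushforward; these are different computations, and only the latter identifies the leaves.
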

\begin{proof}
Let $X\in T_{G\times B}$. The pushforward $q_*X$ is contained in $p_T(L_{\Gt})$ if and only if
\begin{align}
\label{eq1}
\jmath^*\omega^\flat(X)\in\im q^*\quad&\iff\quad \omega(X,\rho_{\dg}(0,\xi))=0 \quad\text{for all }\xi\in\mathfrak{b}\\
						&\iff\quad X\in\ker\Phi^*\sigma(0,\xi) \quad\text{for all }\xi\in\mathfrak{b}\nonumber\\
						&\iff\quad \Phi_*X\in\ker\sigma(0,\xi) \quad\text{for all }\xi\in\mathfrak{b},\nonumber
						\end{align}
where the second equivalence follows from the moment map condition \eqref{mocond}.

Keeping the notation of Lemma \ref{kernel} and letting $x\in\b$, we have
\begin{align}
\label{eq2}
x^R\in\ker(\xi^R+\xi^L)^\vee\text{ for all }\xi\in\mathfrak{b} &\iff (\xi+\Ad_{tu}\xi,x)=0\text{ for all }\xi\in\mathfrak{b}\\
												&\iff (\xi,x+\Ad_{tu}^{-1}x)=0\text{ for all }\xi\in\mathfrak{b}\nonumber\\
												&\iff x\in\mathfrak{u},\nonumber
												\end{align}
where the last equivalence follows from Lemma \ref{kernel}. 

Using \eqref{eq1} and \eqref{eq2},
\[p_T(L_{\Gt})=\{q_*X\mid \jmath^*\omega^\flat(X)\in\im q^*\}=\{q_*X\mid X\in T_{G\times tU}\},\]
and so the leaves integrating the generalized distribution $p_T(L_{\Gt})$ are precisely the submanifolds of $G\times_BB$ of the form
\[G\times_BtU\quad\text{for } t\in T.\qedhere\]\\
\end{proof}

%
%
%
%
%
%
%
\bibliographystyle{plain}
\bibliography{biblio}

\end{document}